\newtheorem{thm}{Theorem}[section]
\newtheorem{lem}[thm]{Lemma}
\newtheorem{cor}[thm]{Corollary}
\theoremstyle{definition}
\newtheorem{rk}[thm]{Remark}
\newcommand{\bC}{\mathbf{C}}
\newcommand{\p}{\partial}
\newcommand{\R}{\mathbb{R}}
\newcommand{\reg}{\mathrm{reg}}
\newcommand{\sing}{\mathrm{sing}}
\title{Harmonic functions on minimal submanifold with cylindrical cone}
\author{Yu Wang}
\address{School of Mathematical Sciences, Fudan University, Shanghai 200433, China}
\email{23110180040@m.fudan.edu.cn}
\begin{document}

\begin{abstract}
In this paper, we study the relationship between the dimension of linear space of harmonic function with growth bounded by a fixed-degree polynomial on a minimal submanifold in Euclidean space and that on its one cylindrical tangent cone at $\infty$ by using the method in \cite{Hua19,Hua23}. Specifically, if this degree does not coincide with any growth of polynomial-growth harmonic functions on the cone, then the corresponding dimensions are equal.
\end{abstract}

\maketitle

\section{Introduction}

Harmonic functions with polynomial growth on Riemannian manifold plays an important role in differential geometry. A famous conjecture proposed by Yau, the dimension of space of such functions with a fixed rate on complete Riemannian manifold with nonnegative Ricci curvature is finite, was solved completely by Colding and Minicozzi in \cite{CM97}. They also showed that such result holds for minimal submanifold $M\subset\R^N$ with Euclidean volume growth (i.e. the density of $M$ at infty $\Theta_M(\infty)$ is finite) in \cite{CM98}. In fact, for fixed $d\ge0,$ let $\mathcal{H}_d(M)$ be the linear space  of polynomial growth harmonic function consists of smooth harmonic functions $u(x)$ on $M$ such that there exists some positive constant $C$ independent of $x$ with
$$|u(x)|\le C(1+|x|^d) \text{ for all } x\in M$$
where $|x|$ denote the Euclidean distance from $x$ to origin, then for any $d>1$ there is constant $C=C(n)$ such that 
\begin{equation}
\dim\mathcal{H}_d(M)\le C\Theta_M(\infty)d^{n-1} \tag{$\star$} \label{eq01}
\end{equation}
For convenience, we always denote by $h_d(M)=\dim\mathcal{H}_d(M).$ 

It is well know that submanfiold in Euclidean space is minimal if and only if all coordinate functions restricted on submanifold is harmonic. So the coordinate functions are in $\mathcal{H}_1(M).$ The exponent of $d$ in (\ref{eq01}) is sharp, but the codimension estimate from (\ref{eq01}) is not optimum because of $M$ should be a plane when $\Theta_M(\infty)$ closes to one by Allard's regularity theorem. 

For Riemannian manifold $M$ with nonnegative Ricci curvature with maximal volume growth and unique tangent cone $\bC$ at $\infty,$ Huang \cite{Hua19,Hua23} showed that $h_d(M)=h_d(\bC)$ for any positive $d$ not equal to the order of harmonic functions with polynomial growth on $\bC.$ Here, the polynomial growth of function is defined with respect to Riemannian metric of $M.$ Colding and Minicozzi \cite{CM20} considered the space of polynomial growth caloric functions on the space-track of an ancient $n$ dimension mean curvature flow in $\R^n$ with finite entropy  and obtain a sharp estimate of codimension under an addition assumption, one tangent flow at $-\infty$ is a cylinder self shrinker.

Now we state main theorem of this paper. 

\begin{thm}\label{thm11}
Let $M^n$ be a smooth embedded minimal submanifold in $\R^N$. Suppose $\bC=\bC_0\times\R^{n-k}$ is one tangent cone of $M$ at $\infty$ with multiplicity one where $\bC_0=\bC_0(\Sigma)$ is a regular minimal hypercone with dimension $k.$
Then for any positive number $d\not\in\mathscr{D}(\bC),$ we have
\begin{equation*}
h_d(M)= h_d(\bC).
\end{equation*}
\end{thm}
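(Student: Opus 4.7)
The overall plan is to adapt the blow-down/extension scheme of \cite{Hua19,Hua23} to the cylindrical cone setting. I would construct two linear maps
\[
\Phi : \mathcal{H}_d(M) \longrightarrow \mathcal{H}_d(\bC), \qquad
\Psi : \mathcal{H}_d(\bC) \longrightarrow \mathcal{H}_d(M),
\]
where $\Phi$ sends $u$ to a subsequential limit of the rescalings $R_j^{-d}u(R_j \,\cdot\,)$ along a sequence $R_j\to\infty$ realizing $\bC$ as tangent cone, and $\Psi$ sends $v$ to a subsequential limit of solutions of Dirichlet problems on $M\cap B_{R_j}$ whose boundary data come from $v$ pulled back via the convergence $R_j^{-1}M\to\bC$. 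The equality $h_d(M)=h_d(\bC)$ will then follow from injectivity of both $\Phi$ and $\Psi$, combined with the Colding--Minicozzi a priori bound (\ref{eq01}) to ensure we are working in finite-dimensional spaces.

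The key analytic ingredient will be a frequency / three-circles dichotomy on $\bC=\bC_0\times\R^{n-k}$. Because $\bC_0=\bC_0(\Sigma)$ is a \emph{regular} minimal hypercone, harmonic functions on $\bC_0$ have a clean spectral decomposition into modes $\rho^{\gamma_i}\phi_i(\omega)$ determined by eigenvalues of the Laplacian on $\Sigma\subset S^k$; coupling these to harmonic polynomials in the $\R^{n-k}$-factor exhibits $\mathscr{D}(\bC)$ as an explicit discrete subset of $[0,\infty)$. For each $d\not\in\mathscr{D}(\bC)$ this yields a spectral gap $\delta(d)>0$ and a three-circles statement on dyadic annuli of $\bC$: writing $I_u(r):=r^{1-n}\int_{\p B_r\cap\bC}u^2$, at least one of
\[
I_u(2r)\ge 2^{2d+2\delta} I_u(r), \qquad I_u(2r)\le 2^{2d-2\delta} I_u(r)
\]
must hold. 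The next step is to transfer this dichotomy to $M$ at sufficiently large scales: using the multiplicity-one convergence $R_j^{-1}M\to\bC$ together with interior elliptic estimates on the regular part, I would show that the same inequality holds on $M$ with slightly relaxed constants for $R$ large.

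With the three-circles on $M$ in hand, injectivity of $\Phi$ follows by the standard iteration in \cite{Hua19,Hua23}: if $\Phi(u)=0$ then iterating the gap inequality upward from any large radius forces $I_u(r)=o(r^{2d})$, contradicting the assumption that $u\in\mathcal{H}_d(M)\setminus\{0\}$ saturates growth rate $d$. Injectivity of $\Psi$ follows similarly, with the three-circles estimate providing the uniform $L^2$-growth control needed to pass to a nontrivial diagonal subsequential limit in the Dirichlet construction and to verify $\Phi\circ\Psi=\mathrm{id}$. Comparing the dimensions via these two injections yields $h_d(M)=h_d(\bC)$.

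The principal obstacle I expect is controlling behavior near the singular axis $\sing\bC=\{0\}\times\R^{n-k}$, where the convergence $R_j^{-1}M\to\bC$ is not smooth and where harmonic functions on $\bC$ can concentrate. In \cite{Hua23} the singular set is an isolated vertex, whereas here it is an entire $(n-k)$-plane. This means (a) the three-circles estimate on $\bC$ must use $L^2$-norms that genuinely see the singular axis; (b) the approximation step for $\Psi$ requires a cut-off supported in a tubular neighborhood of $\sing\bC$ whose contribution to the Dirichlet energy has to be controlled by a capacity argument using the minimal hypercone structure of $\bC_0$; and (c) the convergence $R_j^{-1}M\to\bC$ must be upgraded to graphical convergence outside a definite tube around $\sing\bC$ via Allard-type regularity so that the pull-back in the definition of $\Psi$ makes sense. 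Handling these cut-off and capacity estimates uniformly in the $\R^{n-k}$-direction is the main new difficulty relative to the isolated-singularity case.
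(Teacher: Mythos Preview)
Your overall architecture --- blow down to get $h_d(M)\le h_d(\bC)$, solve Dirichlet problems on $M\cap B_{R_j}$ with boundary data pulled back from $v\in\mathcal{H}_d(\bC)$ to get the reverse inequality, with a three-circles estimate supplying growth control --- is the same as the paper's. You also correctly flag the singular axis $\{0\}\times\R^{n-k}$ as the new difficulty; the paper handles it by an analytic-extension lemma for $L^2$ harmonic functions on $\bC\cap B_1$ (so limits across $\sing\bC$ make sense) and by solving the Dirichlet problems on exhaustions $U_{i,\lambda}$ that avoid a shrinking tube around $\sing\bC$.

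The genuine gap is in your blow-down map. With the normalization $\Phi(u)=\lim R_j^{-d}u(R_j\,\cdot\,)$, every $u$ of sub-$d$ growth --- in particular every constant --- is sent to $0$, so $\Phi$ cannot be injective on $\mathcal{H}_d(M)$. Your injectivity argument (``$\Phi(u)=0\Rightarrow I_u(r)=o(r^{2d})$, contradicting that $u$ saturates growth rate $d$'') assumes a saturation that is not part of the definition of $\mathcal{H}_d(M)$. More fundamentally, neither $\Phi$ nor $\Psi$ can be made into a \emph{linear} map, because subsequential limits do not respect addition and the natural normalization $u\mapsto u(R_j\,\cdot\,)/I_u(R_j)^{1/2}$ is itself nonlinear. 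The paper never constructs such maps; instead, for each inequality it takes $m$ linearly independent functions, orthonormalizes them at a fixed scale ($J_1$ for $M\to\bC$, $J_{s_0/R_i}$ for $\bC\to M$), normalizes each by its own $L^2$ norm, passes to the limit, and checks that orthonormality survives. In the $\bC\to M$ direction this forces a case analysis on whether the Gram--Schmidt coefficients $\alpha_i^{(m,b)}$ stay bounded as $i\to\infty$, which your outline does not anticipate. A smaller correction: your pointwise dichotomy on $\bC$ (for each $r$, either $I_u(2r)\ge 2^{2d+2\delta}I_u(r)$ or $I_u(2r)\le 2^{2d-2\delta}I_u(r)$) is false when $u$ mixes modes with $q_i<d$ and $q_j>d$; what actually holds, and what the paper uses, is the monotone form $I_u(s)\le 2^{2d}I_u(s/2)\Rightarrow I_u(s/2)\le 2^{2d}I_u(s/4)$, which comes from log-convexity of $I_u$ on $\bC$ and is then transferred to $M$ at large scales by a contradiction/compactness argument.
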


Throughout this paper a function on $\bC$ we means that defined on $\reg\bC.$ Here $\mathscr{D}(\bC)=\{q_i\}_{i\ge 0}$ is set of all orders of polynomial growth harmonic functions on $\bC$ which can be seen in section 3. In particular, if denote by $\lambda_1$ the first non-zero eigenvalue of laplacian operator on $\Sigma,$ then 
$$q_1=-\frac{k-2}{2}+\sqrt{(\frac{k-2}{2})^2+\lambda_1}.$$
Since every minimal hypercone in $\R^{k+1}$ ($k+1\le 7$) is a hyperplane, we may assume $k>6.$ When $\bC_0$ is a quadratic cone, $\lambda_1=k-1$ with multiplicity $k+1,$ then we have the following corollary.

\begin{cor}
Under the hypothesis of Theorem \ref{thm11}, if $\bC_0$ is a quadratic cone, then $M$ is a hypersurface. Moreover,
\begin{itemize}
\item[(1)] for $k=n,$ if in addition $\bC_0$ is minimizing, then by \cite{SS86},
 up to translation and rotation, $M=H(\lambda)$ for some $\lambda\not=0$ where $H(\lambda)$ is the Hardt-Simon foliation;
\item[(2)] for $k<n,$ if in addition $\bC_0$ is strictly minimizing and strictly stable and $M$ lying to one side of $\bC$ and , then by \cite{ES24}, $M=H(\lambda)\times\R^l$ for some $\lambda\not=0.$ 
\end{itemize}
\end{cor}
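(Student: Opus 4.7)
The plan is to combine Theorem \ref{thm11} with an explicit dimension count for $h_1(\bC)$, using that coordinate functions on a minimal submanifold of $\R^N$ are themselves linear harmonic functions.

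First I would compute $h_d(\bC)$ for $d$ just above $1$. When $\bC_0$ is a quadratic cone the formula from the introduction gives $q_1 = 1$; because $\mathscr{D}(\bC)$ is a discrete set there is a gap $(1,d_0)$, so any $d$ in this gap satisfies $d \notin \mathscr{D}(\bC)$. Since $\bC = \bC_0 \times \R^{n-k}$ is a cone over the spherical join $L = \Sigma *_{sph} S^{n-k-1}$, harmonic functions on $\bC$ of order $\le 1$ correspond to eigenfunctions of $\Delta_L$ of eigenvalue $\le n-1$; a separation-of-variables argument on $L$ identifies this space with constants together with the restrictions of linear functions on $\R^{n+1} = \R^{k+1} \times \R^{n-k}$ to $L$, yielding
\begin{equation*}
h_d(\bC) = h_1(\bC) = 1 + (k+1) + (n-k) = n+2.
\end{equation*}

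Second, minimality of $M \subset \R^N$ makes the ambient coordinates $x_1, \ldots, x_N$ harmonic of linear growth on $M$. After reducing to the affine hull of $M$ I may assume $\{1,x_1,\ldots,x_N\}$ is linearly independent in $\mathcal{H}_1(M) \subset \mathcal{H}_d(M)$; applying Theorem \ref{thm11} then gives
\begin{equation*}
N + 1 \le h_d(M) = h_d(\bC) = n + 2,
\end{equation*}
so $N \le n+1$. The converse inequality $N \ge n+1$ follows from the fact that $\bC$ has $(n+1)$-dimensional linear span in $\R^N$. Hence $N = n+1$ and $M$ is a hypersurface. Parts (1) and (2) of the corollary then follow immediately: in case (1) from the Simon-Solomon classification \cite{SS86} of smooth minimal hypersurfaces asymptotic to a minimizing quadratic cone as Hardt-Simon leaves, and in case (2) from the Edelen-Spolaor theorem \cite{ES24}, which under the hypotheses of strict minimization, strict stability, and one-sidedness yields the splitting $M = H(\lambda) \times \R^{n-k}$.

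I expect the main obstacle to be the spectral count on the join $L$: showing that the eigenvalue $n-1$ has multiplicity exactly $n+1$, not merely at least $n+1$. This requires checking that among the Jacobi-type ODEs obtained by separating variables in $(\theta,\omega_1,\omega_2)$ with $\omega_1 \in \Sigma$ and $\omega_2 \in S^{n-k-1}$, the only modes producing $\lambda = n-1$ come from first-eigenspace data on either factor combined with $\cos\theta$ or $\sin\theta$. Once this count and the existence of a gap in $\mathscr{D}(\bC)$ immediately above $1$ are secured, the remainder is a short application of Theorem \ref{thm11} and invocation of the cited hypersurface classifications.
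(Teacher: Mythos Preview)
Your strategy---compute $h_d(\bC)=n+2$ for $d$ just above $1$, observe that $\{1,x_1,\ldots,x_N\}\subset\mathcal{H}_1(M)$ after passing to the affine hull, and apply Theorem~\ref{thm11} to force $N\le n+1$---is correct and is exactly what the paper intends (the corollary is stated without a separate proof, relying on the preceding remark that $\lambda_1=k-1$ with multiplicity $k+1$). The only substantive difference is how you propose to compute $h_1(\bC)$. You work on the link $L=\Sigma*_{sph}S^{n-k-1}$ and must determine the multiplicity of the eigenvalue $n-1$ for $\Delta_L$, which is correct but, as you yourself flag, requires some work on the join spectrum. The paper's Section~3 decomposition in $(r,\theta,y)$ coordinates bypasses this entirely: harmonic functions on $\bC$ of order $\le1$ are spanned by constants, by $r\varphi_i(\theta)$ with $\varphi_i$ in the $(k-1)$-eigenspace of $\Sigma$ (these have $\mu_i=1$, $j=0$, dimension $k+1$), and by the linear functions $y_1,\ldots,y_{n-k}$ (the $\mu_0=0$, $j=1$ piece); no other pair $(i,j)$ contributes since $\lambda_1=k-1$ forces $\mu_i\ge1$ for all $i\ge1$. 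This gives $1+(k+1)+(n-k)=n+2$ immediately, with no spectral analysis on the join needed. One minor slip: the reference \cite{ES24} is Edelen--Sz\'ekelyhidi, not Edelen--Spolaor.
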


If $M$ converges to $\bC$ with multiplicity great than one, Theorem 1.1 is failed. For example, let $M=\{|x_{n+1}|=1\}$ be the union of 2 parallel planes in $\R^{n+1},$ then $P=\{x_{n+1}=0\}$ with multiplicity $2$ is the tangent cone of $M$ at infinity. Obviously, $h_d(M)=2h_d(P)> h_d(P).$ One may conjecture that when $\bC$ is one tangent cone of $M$ at infinity with multiplicity $m>1,$ then $h_d(M)=mh_d(\bC).$

The article is arranged by the following. In section 2, we establish notations and list some lemmas that will be frequently used in the subsequent sections. In section 3, we discuss harmonic functions with polynomial growth on cylindrical cone $\bC.$ In section 4, we obtain a three-circle type theorem as \cite{Xu16} and follow the idea in \cite{Hua19,Hua23} to give a proof of Theorem \ref{thm11}.

\section*{Acknowledgements} The author would like to thank his advisor Prof. Qi Ding, for constant support and many helpful discussions.

\section{Preliminary}
Throughout this paper, $\bC=\bC_0\times\R^{n-k}\subset\R^N$ is a cylindrical cone where $\bC_0=\bC_0(\Sigma)$ is a $k$ dimension regular minimal cone with $\sing\bC_0=\{0\}$ and  $M^n\subset\R^N$ is a smooth embedded minimal submanifold such that there exists $R_i\to\infty$ so that the rescaled submanifold $M_i=R_i^{-1}M$ converges to $\bC$ as varifold with multiplicity one when $i\to\infty.$ 

The constant $C$ may change from line to line.

Let $B_s$ be the open ball in $R^N$ centered in origin with radius $s.$ For any $n$ dimension subset $U\subset\R^N$ and $u,v\in L^2_{loc}(U),$ set
\begin{equation*}
I^{(U)}_u(s)=s^{-n}\int_{U\cap B_{s}}u^2d\mathcal{H}^n,J^{(U)}_s(u,v)=s^{-n}\int_{U\cap B_s}uvd\mathcal{H}^n.
\end{equation*}
For the sake of convenience, hereafter we will consistently omit the superscripts on $I$ and $J,$ as well as the differentials  $d\mathcal{H}^n$ in integrals. 

Take $u\in\mathcal{H}_d(M),$ then $I_u(s)\le C(1+s^{2d}).$ From the monotonicity formula about subharmonic functions, it infers that $I_u(s)$ is non-decreasing with respect to $s.$ For non-decreasing functions with polynomial growth, an analogous Harnack inequality has been established by Colding and Minicozzi. The following lemma is a simplified version.

\begin{lem}[\cite{CM97}]\label{lem21}
Given any $\Omega>1$ and $\delta>0,$, there exists a sequence of integers $m_i\to\infty$ such that 
$$I_{u}(\Omega^{m_i+1})\le \Omega^{2d+\delta}I_{u}(\Omega^{m_i}).$$
\end{lem}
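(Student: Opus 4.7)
The plan is to run a pigeonhole / iteration argument against the polynomial growth bound. Suppose no such sequence $m_i \to \infty$ exists. Then there is some $m_0$ such that for every integer $m \ge m_0$ the reverse inequality
\[
I_u(\Omega^{m+1}) > \Omega^{2d+\delta}\, I_u(\Omega^{m})
\]
holds. We may assume $u \not\equiv 0$ on $B_{\Omega^{m_0}}\cap M$ (otherwise, by unique continuation or by simply enlarging $m_0$, we can arrange $I_u(\Omega^{m_0}) > 0$; the statement is trivial if $u$ vanishes identically).

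Iterating the assumed strict inequality from $m_0$ up to $m_0+j$ gives
\[
I_u(\Omega^{m_0+j}) > \Omega^{(2d+\delta)j}\, I_u(\Omega^{m_0}).
\]
On the other hand, the hypothesis $u \in \mathcal{H}_d(M)$ yields the upper bound $I_u(s) \le C(1+s^{2d})$, so in particular
\[
I_u(\Omega^{m_0+j}) \le C\bigl(1 + \Omega^{2d(m_0+j)}\bigr).
\]
Combining the two estimates and dividing through by $\Omega^{2dj}$ produces
\[
\Omega^{\delta j}\, I_u(\Omega^{m_0}) < C\bigl(\Omega^{-2dj} + \Omega^{2dm_0}\bigr),
\]
whose left-hand side blows up with $j$ while the right-hand side stays bounded, a contradiction.

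This forces the desired inequality to hold for infinitely many $m$, and we extract the sequence $m_i \to \infty$. The only mildly delicate step is ensuring $I_u(\Omega^{m_0}) > 0$, which is handled by noting that $I_u$ is monotone non-decreasing (from the subharmonic monotonicity mentioned just before the lemma) so once $I_u$ is positive at some scale it stays positive; the case $u \equiv 0$ is vacuous. There is no real analytical obstacle: the statement is essentially the observation that a function with at most polynomial growth of order $2d$ cannot grow strictly faster than $\Omega^{2d+\delta}$ between consecutive geometric scales indefinitely often.
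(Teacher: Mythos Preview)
Your proof is correct and is exactly the standard pigeonhole/iteration argument from \cite{CM97}; the paper itself does not supply a proof but merely cites that reference, so there is nothing to compare beyond noting that your argument is the intended one.
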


\cite{Guo24} extends the Rellich compactness theorem for Sobolev spaces on varifolds to a sequence of converging multiplicity one stationary varifolds. In our case, we have

\begin{lem}[\cite{Guo24}]\label{lem22}
Let $U$ be an open subset in $\R^N$ and $\tilde{M}_i=M_i\cap U,\tilde{\bC}=\bC\cap U.$ Suppose there is $W\subset U$ open so that $\overline{W}$ is compact and $\sup_{i}\Vert f_i\Vert_{W^{1,p}(\tilde{M}_i\cap W)}\le C$ then there exists a subsequence still denoted by $f_i$ and $f\in L^{q}(\tilde{\bC}\cap W)$ so that $f_i\to f$ in $L^q$ for $1\le q<\frac{np}{n-p}.$
\end{lem}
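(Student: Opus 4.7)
The plan is to adapt the classical Rellich--Kondrachov argument to a situation where the underlying spaces (the varifold supports) change with $i$. First, I would use the Michael--Simon Sobolev inequality on each minimal submanifold $M_i$. Since the $M_i$ are smooth and stationary in $\R^N$, this gives a Sobolev constant independent of $i$, so from $\sup_i \Vert f_i \Vert_{W^{1,p}(\tilde{M}_i\cap W)} \le C$ I would obtain a uniform bound $\Vert f_i\Vert_{L^{np/(n-p)}(\tilde M_i\cap W)} \le C'$.

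Second, I would extract a candidate limit $f$ by varifold convergence. The Radon measures $|f_i|\, d\Haus^n\mres \tilde M_i$ on $\overline{W}$ have uniformly bounded total variation, so along a subsequence they converge weak-$*$ to a Radon measure $\mu$ on $\overline{W}$. Away from $\sing\bC = \{0\}\times\R^{n-k}$, the convergence $M_i\to\bC$ is smooth by Allard's regularity theorem (using multiplicity one and minimality), which lets me represent $f_i$ as functions on local graphs over $\reg\bC$ and identify $\mu$ on $\reg\bC$ with an $L^{np/(n-p)}$ function $f$. Since $\sing \bC$ has zero $\Haus^n$-measure, $f$ extends to an element of $L^{np/(n-p)}(\tilde\bC\cap W)$.

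Third, to upgrade weak convergence to strong convergence in $L^q$ for $q<np/(n-p)$, I would run a Fréchet--Kolmogorov-type argument. Cover $\overline{W}$ by finitely many balls $B_\rho(x_\alpha)\subset\R^N$ of small radius. On each ball and for large $i$, the Poincaré inequality on $M_i$ (again with constants uniform in $i$ for small scales, by the monotonicity formula and Michael--Simon) controls the oscillation of $f_i$ around its mean $\bar f_{i,\alpha}$ over $\tilde M_i\cap B_\rho(x_\alpha)$ in terms of $\rho\Vert\nabla f_i\Vert_{L^p}$. Smooth graphical convergence on $\reg \bC$ then gives convergence of these local averages to the corresponding means of $f$ on $\bC$. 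Combining this with the uniform higher integrability yields strong $L^p$ convergence away from $\sing\bC$; a standard interpolation between $L^p$ convergence and the uniform $L^{np/(n-p)}$ bound produces $L^q$ convergence for every $q<np/(n-p)$.

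The main obstacle is the behavior near $\sing\bC$, where there is no graphical description of $M_i$, only varifold convergence. The way around this is a cut-off argument: choose a cut-off $\chi_\varepsilon$ vanishing in an $\varepsilon$-neighborhood of $\sing\bC$ and equal to one outside a $2\varepsilon$-neighborhood. On the support of $\chi_\varepsilon$ the analysis above applies, and the error introduced by $(1-\chi_\varepsilon)$ is controlled by the uniform $L^{np/(n-p)}$ bound times $\Haus^n(\{(1-\chi_\varepsilon)>0\}\cap\tilde M_i)^{1/q-\,(n-p)/(np)}$, which tends to zero as $\varepsilon\to 0$ uniformly in $i$ because $M_i\to\bC$ as Radon measures and $\Haus^n(\sing\bC)=0$. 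A diagonal extraction in $\varepsilon$ then produces the desired subsequence.
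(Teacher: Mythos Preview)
The paper does not prove this lemma at all: it is simply quoted from \cite{Guo24} and stated without argument, so there is no proof in the paper to compare your proposal against.

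That said, your outline is a reasonable sketch of how such a Rellich--Kondrachov theorem for converging multiplicity-one stationary varifolds is typically established: uniform higher integrability via Michael--Simon, smooth graphical convergence over $\reg\bC$ via Allard, classical compactness in the graphical region, and a cut-off plus interpolation to absorb the contribution near $\sing\bC$. One caution on your third step: a Poincar\'e inequality on $M_i\cap B_\rho(x_\alpha)$ with constants uniform in $i$ is not automatic for an arbitrary ball (the intersection could a priori be disconnected or degenerate); this is exactly where Allard's theorem is already doing the work, since for balls away from $\sing\bC$ and $i$ large the set $M_i\cap B_\rho(x_\alpha)$ is a single smooth graph over $\bC$. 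In practice it is cleaner to pull the $f_i$ back to functions on $\reg\bC$ via the graphical parametrisation and invoke classical Rellich there, rather than rebuilding Poincar\'e ball by ball on $M_i$; your cut-off argument near $\sing\bC$ is then exactly what closes the proof.
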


\section{Harmonic functions on cylindrical cone}
Take $\{\varphi_i\}_{i\ge0}$ be an $L^2$-ON basis on $\Sigma$ such that $\Delta_{\Sigma}\varphi_i+\lambda_i\varphi_i=0$ where
$$0=\lambda_0<\lambda_1\le\lambda_2\le\cdots\le\lambda_i\le\cdots,\lambda_i\to\infty \text{ as } i\to\infty$$
are eigenvalues of Laplacian operator on $\Sigma.$ By spectral theory, every $L^2$ function on $\Sigma$ can be expanded by $\{\varphi_i\}_{i\ge0}.$

For $(x,y)\in\bC,$ denote $x=r\theta,\rho\omega=(r,y)$ where $r=|x|,\rho=\sqrt{r^2+y^2}.$ A smooth function $v$ on $\bC\cap B_1$ with $\int_{\bC\cap B_1}r^{-2}v^2<\infty$ can be viewed as a $L^2$ function on $\Sigma$ if fixed $r,y,$ so we can write
$$v(x,y)=\sum_{i\ge0}v_i(r,y)\varphi_i(\theta)$$
where $v_i(r,y)=\int_{\Sigma}v(r\theta,y)\varphi_i(\theta),$ and then $v$ is harmonic iff for every $i\ge0$
$$\p_{rr}v_i+\frac{k-1}{r}\p_rv_i-\frac{\lambda_i}{r^2}v_i+\Delta_yv_i=0$$ 
on half ball $B^+=\{(r,y)\in\R^{n-k+1}:r>0, r^2+y^2<1\}.$ Set
\begin{equation*}
\mu_i=-\frac{k-2}{2}+\sqrt{(\frac{k-2}{2})^2+\lambda_i},
\end{equation*}
by direct computation, $\tilde{v}_i:=r^{-\mu_i}v_i$ such that the following equation
\begin{equation*}
r^{-1-\beta_i}\p_r(r^{1+\beta_i}\p_r\tilde{v}_i)+\Delta_y\tilde{v}_i=0,
\end{equation*}
and $\int_{B^+}r^{-2}\tilde{v}_i^2r^{1+\beta_i}<\infty$ where $\beta_i=2\mu_i+k-2.$ Such function $\tilde{v}_i$ is called $\beta_i$-harmonic which introduced by Leon Simon in \cite{Sim21} and can be extend analytically to $\bC\cap B_1\cap\{r=0\}$ in the variables $r^2$ and $y$ and be written as a convergent sum of 
$$\tilde{v}_i=\sum_{j\ge0}v_{ij}(r,y),$$
where $v_{ij}$ is the order $j$ homogeneous (w.r.t. $\rho$) $\beta_i$-harmonic polynomial in the variables $r^2,y$ and $\{v_{ij}\}_{j\ge0}$ restricted to hemisphere are $L^2(\omega_1^{1+\beta_i}d\omega)$-orthogonal. Hence, $v$ admits an expansion
\begin{equation*}
v=\sum_{i,j\ge0}r^{\mu_i}v_{ij}(r,y)\varphi_i(\theta)
\end{equation*}
and for any $0<s\le1$
\begin{equation*}
I_v(s)=s^{-m}\int_{\bC\cap B_{s}}v^2=\sum_{i,j\ge0}c_{ij}^2s^{2(\mu_i+j)}=\sum_{i\ge0}c_i^2s^{2q_i},
\end{equation*}
where $\{q_i\}_{i\ge0}$ is a rearrangement of $\{\mu_i+j\}_{i,j\ge0}$ such that
\begin{equation*}
0=q_0<q_1<q_2<\cdots<q_i\to\infty.
\end{equation*}

The following lemma implies polynomial growth harmonic functions on $\bC$ can be extend analytically to $\sing\bC.$ For general minimal cone, such a property does not hold. For example, let $Y$ be the stationary $1$-dimensional cone consisting of three rays from origin and mutually separated by $120^{\circ}.$ Let $u$ be a function that takes values $0,1,2$ on the three connected components of $Y\backslash\{0\},$ respectively. It is obvious that $u\in\mathcal{H}_1(Y)$ but cannot extend analytically to $\{0\}.$

\begin{lem}
Let $v$ be a harmonic function on $\bC\cap B_1$ with $\int_{\bC\cap B_1}v^2<\infty,$ then $v$ extends analytically to $\bC\cap B_1\cap\{r=0\}.$
\end{lem}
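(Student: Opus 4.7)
The plan is to reduce to the framework set up in the paragraphs just above the lemma, which for a smooth harmonic function satisfying the stronger weighted integrability $\int_{\bC\cap B_1}r^{-2}v^2<\infty$ already produces the convergent analytic series expansion $v = \sum_{i,j} r^{\mu_i} v_{ij}(r,y)\varphi_i(\theta)$ through $\{r=0\}$. The only gap is to upgrade the stated hypothesis $\int v^2<\infty$ to this weighted bound on a slightly smaller ball; since $\sing\bC=\{0\}\times\R^{n-k}$ has codimension $k>6$ in $\bC$, the upgrade is delivered by a Caccioppoli--Hardy pair, after which Simon's theorem \cite{Sim21} applied mode-by-mode does the rest.

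For the upgrade I would fix a radial cut-off $\phi$ equal to $1$ on $B_{1/2}$ and supported in $B_{3/4}$. Testing the harmonic equation against $\phi^2 v$, with auxiliary cut-offs $\eta_\varepsilon$ supported in $\{r\ge\varepsilon\}$ to isolate the singular axis and letting $\varepsilon\to 0$ (the boundary error scales like $\varepsilon^{k-2}\to 0$), yields the Caccioppoli estimate
$$\int_{\bC\cap B_{1/2}}|\nabla_\bC v|^2 \le C\int_{\bC\cap B_{3/4}}v^2.$$
The Hardy inequality on $\bC$, valid precisely because the axis $\sing\bC$ has codimension $k>2$, then gives $\int_{\bC\cap B_{1/2}}r^{-2}v^2<\infty$. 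After rescaling I may assume the weighted integrability holds on all of $\bC\cap B_1$.

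With this in hand I decompose $v = \sum_i v_i(r,y)\varphi_i(\theta)$ as in the setup above, so that each $\tilde v_i := r^{-\mu_i} v_i$ is $\beta_i$-harmonic in Simon's sense. The identity $r^{-2-2\mu_i+1+\beta_i}=r^{k-3}$ and the orthonormality of $\{\varphi_i\}$ on $\Sigma$ give
$$\int_{B^+} r^{-2}\tilde v_i^2\, r^{1+\beta_i}\, dr\, dy = \int_{B^+} v_i^2\, r^{k-3}\, dr\, dy \le C\int_{\bC\cap B_1} r^{-2} v^2 < \infty,$$
so Simon's analyticity theorem in \cite{Sim21} extends each $\tilde v_i$ analytically through $\{r=0\}$ as a convergent sum of homogeneous $\beta_i$-harmonic polynomials in $(r^2,y)$. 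The Parseval-type identity $I_v(s)=\sum c_{ij}^2 s^{2(\mu_i+j)}$ recorded above controls the coefficients uniformly, so the reassembled series $v = \sum_{i,j} r^{\mu_i} v_{ij}(r,y)\varphi_i(\theta)$ converges locally uniformly with all derivatives on a neighbourhood of $\{r=0\}\cap B_1$ in $\bC$, providing the required analytic extension.

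The principal obstacle is the Caccioppoli step: one must justify integration by parts across the $(n-k)$-dimensional singular axis of $\bC$. The argument hinges on the codimension $k>6$ ensuring that $\sing\bC$ has vanishing $H^1$-capacity in $\bC$, so that the boundary contributions produced by the regularising cut-offs vanish in the limit. Once this capacity argument is in place, everything else is a direct assembly of Simon's theorem with the preceding setup.
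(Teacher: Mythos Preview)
Your approach is correct, but the paper's proof is considerably shorter and more direct. Rather than Caccioppoli plus Hardy, the paper simply invokes the mean value inequality for the subharmonic function $v^2$: for any $0<s<1$,
\[
\sup_{\bC\cap B_s} v^2 \le C(n,s)\int_{\bC\cap B_1} v^2,
\]
and then observes that
\[
\int_{\bC\cap B_s} r^{-2} v^2 \le \Bigl(\sup_{\bC\cap B_s} v^2\Bigr)\int_{\bC\cap B_s} r^{-2} < \infty,
\]
since the singular axis has codimension $k>2$ and hence $r^{-2}$ is integrable over $\bC\cap B_s$. With this weighted bound the expansion framework preceding the lemma applies on $\bC\cap B_s$ and gives the analytic extension there; letting $s\uparrow 1$ finishes.

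The advantage of the paper's route is that it entirely sidesteps the capacity issue you identify as the principal obstacle: no integration by parts across $\sing\bC$ is needed, because the $L^\infty$ bound comes from the mean value inequality for subharmonic functions on minimal submanifolds, which holds up to the singular set. Your Caccioppoli--Hardy argument reaches the same weighted bound but imports two extra analytic ingredients where a pointwise sup-estimate already suffices. One small wrinkle in your version: the Hardy inequality should be applied to the compactly supported function $\phi v$ rather than directly to $v$ on $B_{1/2}$, which introduces a harmless $\int v^2|\nabla\phi|^2$ term on the right; this is immediately absorbed by the hypothesis.
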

\begin{proof}
For $0<s<1,$ by mean value inequality about subharmonic function
$$\sup_{\bC\cap B_{s}}v^2\le C(n,s)\int_{\bC\cap B_1}v^2,$$ and then
$$\int_{\bC\cap B_{s}}r^{-2}v^2\le C(\bC)\int_{0}^{1/2}\rho^{k-3}d\rho \sup_{\bC\cap B_{s}}v^2< \infty.$$
By the above discuss, $v$ has an expansion on $\bC\cap B_{s}$ and can extend analytically to $\bC\cap B_s\cap\{r=0\}$. Note that $v$ is a smooth function on $\bC\cap B_1\backslash\{r=0\}$ and the arbitrariness of $s$ we infer that $v$ extends analytically to $\bC\cap B_1\cap\{r=0\}.$
\end{proof}

\section{Proof of Theorem \ref{thm11}}
First we concern convergence of harmonic functions under the converge of minimal submanifolds. It is inspired by \cite{Guo24} where focuses on Green's functions.

\begin{lem}\label{lem41}
If $\{u_i\}$ is a sequence of harmonic functions on $M_i$ with $I_{u_i}(s_i)\le 1$ for some positive numbers $s_i\to1,$  then there exists a harmonic function $v$ on $\bC\cap B_1$ which admits an analytic continuation to $\bC\cap B_1\cap\{r=0\}$ and up to a subsequence $u_i$ smoothly converges to $v$ on any compact subset of $\bC\cap B_1\backslash\{r=0\}.$
\end{lem}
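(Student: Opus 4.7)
The plan is to combine Allard's regularity (so that $M_i \to \bC$ smoothly away from $\sing\bC = \{0\}\times\R^{n-k}$) with the subharmonicity of $u_i^2$ on the minimal submanifold $M_i$ to obtain uniform local estimates, then extract a smooth subsequential limit, and finally apply Lemma 3.1 (actually the preceding lemma in Section 3) to get the analytic continuation.

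First, I would work on an arbitrary compact set $K \subset \reg\bC \cap B_1$. Since $\bC$ has multiplicity one and $M_i \to \bC$ as varifolds, Allard's regularity theorem implies that, for $i$ large, $M_i$ is represented in a tubular neighborhood of $K$ as a smooth graph over $\bC$ whose $C^{k,\alpha}$ norm tends to zero. In particular the induced metrics on $M_i$ near $K$ converge smoothly to that of $\bC$, and one has uniform control on $|A_{M_i}|$ and the injectivity radius.

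Next, I would derive uniform $C^\infty$ bounds for $u_i$ on $K$. Because $M_i$ is minimal and $u_i$ is harmonic, $u_i^2$ is subharmonic on $M_i$, so the Michael--Simon mean value inequality applied to the $L^2$ bound $I_{u_i}(s_i)\le 1$ produces a uniform $L^\infty$ bound $\sup_K |u_i| \le C(K)$. Pulling back $u_i$ through the graph representation provided by Allard gives a sequence of harmonic functions (with respect to metrics converging smoothly to the metric of $\bC$) with a uniform $L^\infty$ bound; standard Schauder theory then yields uniform $C^{k,\alpha}(K)$ bounds for every $k$. A diagonal argument over an exhaustion $K_j \nearrow \reg\bC \cap B_1$, together with Arzelà--Ascoli, gives a subsequence of $u_i$ converging in $C^\infty_{\mathrm{loc}}(\reg\bC \cap B_1)$ to a function $v$; the limit is then smooth and harmonic on $\reg\bC \cap B_1$.

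It remains to establish $\int_{\bC \cap B_1} v^2 < \infty$ so that the previous lemma applies. For any $r<1$, I would fix a compact exhaustion of $\reg\bC \cap B_r$; the smooth convergence $u_i \to v$ on each piece, together with $\Haus^n(\sing\bC)=0$ and Fatou's lemma, yields
\begin{equation*}
\int_{\bC \cap B_r} v^2 \le \liminf_{i\to\infty} \int_{M_i \cap B_{s_i}} u_i^2 \le 1,
\end{equation*}
(alternatively, Lemma \ref{lem22} supplies the same bound via $L^2$ convergence). Letting $r \to 1$ gives $\int_{\bC \cap B_1} v^2 \le 1 < \infty$, so the preceding lemma in Section 3 provides the analytic continuation of $v$ across $\bC \cap B_1 \cap \{r=0\}$.

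The main obstacle is the uniform higher-order regularity of $u_i$ on compact subsets of $\reg\bC \cap B_1$: one must translate the varifold/$C^\infty$ convergence $M_i \to \bC$ into elliptic estimates in a single coordinate chart so that Schauder theory can be applied uniformly in $i$. Once Allard's theorem is invoked to produce common graph coordinates this is essentially standard, and the remaining compactness and Fatou arguments are routine.
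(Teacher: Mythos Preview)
Your argument is correct, but it follows a different route from the paper's. The paper first obtains a uniform $W^{1,2}$ bound on $M_i\cap B_s$ via the Caccioppoli inequality, then invokes Lemma~\ref{lem22} (the Rellich-type compactness for a sequence of converging varifolds) to extract an $L^2$ limit $v$ on $\bC\cap B_s$; only afterwards does it appeal to Allard's regularity to upgrade the convergence to $C^\infty_{\mathrm{loc}}$ away from $\{r=0\}$. You instead go straight for the smooth convergence: Allard gives common graph coordinates, the Michael--Simon mean value inequality turns the $L^2$ bound into a local $L^\infty$ bound, and Schauder plus Arzel\`a--Ascoli produce the $C^\infty_{\mathrm{loc}}$ limit; the $L^2$ bound on $v$ is then recovered a posteriori by Fatou. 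Your approach is more elementary in that it avoids Lemma~\ref{lem22} altogether and uses only standard elliptic theory on the graphical patches; the paper's approach has the advantage of producing an $L^2$ limit globally on $\bC\cap B_s$ (including near the singular set) in one stroke, which makes the bound $I_v(1)\le 1$ immediate rather than requiring a separate exhaustion argument.
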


\begin{proof}
Fixed $0<s<1,$ form Cacciopolli inequality of harmonic functions, for any $s_i>s,$
$$\int_{M_i\cap B_s}(u_i^2+|\nabla u_i|^2)\le \int_{M_i\cap B_{s_i}} u_i^2+\frac{C}{(s_i-s)^2}\int_{M_i\cap B_{s_i}}u_i^2\le \frac{Cs_i^n}{(s_i-s)^2}.$$
Since $\lim_{i\to\infty}s_i=1,$ $\Vert u_i\Vert_{W^{1,2}(M_i\cap B_s)}$ are uniformly bounded provided $i$ sufficiently large. By Theorem \ref{lem22}, up to a subsequence of $u_i,$  there exists $v^{(s)}\in L^2(\bC\cap B_s)$ such that $u_{i}$ converges to $v^{(s)}$ strongly in $L^2$ on $B_s.$  After taking diagonal subsequence, there exists subsequence of $u_i$
(still denote by $u_i$ ) and $v\in L^2_{loc}(\bC\cap B_1)$ so that $u_i\to v$ in $L^2(B_s)$ for every $0<s<1.$ The harmonicity of $v$ is a natural consequence of local $L^2$ convergence and by $I_{u_i}(s_i)\le1$ we infer that for any $\varepsilon>0,$
$$I_v(1-\varepsilon)=\lim_{i\to\infty}I_{u_i}(1-\varepsilon)\le 1,$$
so $v\in L^2(B_1)$ with $I_v(1)\le1.$ From Theorem 3.1, $v$ can analytic continuation to $\bC\cap B_1\cap\{r=0\}.$ 

By Allard's regularity theorem, $M_i$ smoothly converges to $\bC$ on any compact subset of $B_1\backslash \{r=0\}.$
Hence $u_i$ smoothly converge to $v$ on any compact subset of $\bC\cap B_1\backslash\{r=0\}.$ 
\end{proof}

Before proving a three-circle type theorem as in \cite{Xu16}, we prove an auxiliary lemma which will be used frequently in later.

\begin{lem}\label{lem42}
If $\{a_n\},\{b_n\}$ are two sequences of positive numbers tend to infinity, then there exists subsequences $\{a_{n_k}\}$ of $\{a_n\}$ and $\{b_{m_k}\}$ of $\{b_n\}$ so that $$\lim_{k\to\infty}\frac{a_{n_k}}{b_{m_k}}=\alpha\in[1,2].$$
\end{lem}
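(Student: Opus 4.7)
The plan is to pass to monotone subsequences and apply a ``largest-below'' selection rule, followed by Bolzano--Weierstrass. By first extracting strictly increasing subsequences of both $\{a_n\}$ and $\{b_n\}$ (possible since both tend to $\infty$), we may assume both sequences are strictly increasing and unbounded. For each $n$ large enough that $a_n \ge b_1$, set $m(n) := \max\{m : b_m \le a_n\}$, which is finite because $b_m \to \infty$. Define $\rho_n := a_n / b_{m(n)}$; by construction $\rho_n \ge 1$ and $a_n < b_{m(n)+1}$, so in particular $\rho_n \in [1, b_{m(n)+1}/b_{m(n)})$.

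If $\rho_n \le 2$ for infinitely many $n$, extract such a subsequence $\{n_k\}$. The values $\rho_{n_k}$ live in the compact interval $[1, 2]$, so a further subsequence converges to some $\alpha \in [1, 2]$. Setting $m_k := m(n_k)$, the identity $a_{n_k}/b_{m_k} = \rho_{n_k}$ gives the desired limit. One verifies $m_k \to \infty$ from $b_{m_k} \ge a_{n_k}/2 \to \infty$ together with monotonicity of $\{b_m\}$; similarly $n_k \to \infty$ since $a_{n_k} \to \infty$, so both index sequences are genuine subsequences of the originals.

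In the remaining case, $\rho_n > 2$ for all large $n$, so the definition of $m(n)$ forces $b_{m(n)+1} > a_n > 2 b_{m(n)}$, i.e.\ the $b$-sequence has consecutive ratios exceeding $2$ at the indices $m(n)$. The strategy here is to run the symmetric construction with the roles of $\{a_n\}$ and $\{b_m\}$ reversed: set $n'(m) := \max\{n : a_n \le b_m\}$ and $\tau_m := b_m/a_{n'(m)} \ge 1$. If $\tau_m \le 2$ along a subsequence, a ratio in $[1,2]$ for $b_{m_k}/a_{n_k}$ is produced, which one then converts into a ratio in $[1, 2]$ for $a/b$ by shifting an index by one (e.g.\ replacing $n_k$ by $n_k + 1$ and using monotonicity) so that the ratio flips past $1$ and lands in $[1,2]$.

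The main obstacle is treating the genuinely pathological case in which both $\rho_n > 2$ and $\tau_m > 2$ for all large indices, which corresponds to the two sequences growing on incompatible geometric scales and ``skipping over'' each other dyadically. I expect this is handled by a careful interleaving argument: consider the merged increasing sequence $c_1 < c_2 < \cdots$ formed from $\{a_n\} \cup \{b_m\}$ and locate infinitely many consecutive pairs with $c_{\ell+1}/c_\ell \le 2$ where one member is an $a_n$ and the other a $b_m$. Making this last step precise, using only the hypotheses $a_n, b_m \to \infty$ and strict monotonicity, is the delicate technical point of the proof.
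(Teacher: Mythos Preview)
Your argument is explicitly incomplete: you leave the ``pathological case'' (both $\rho_n>2$ and $\tau_m>2$ for all large indices) unresolved, saying only that you ``expect'' an interleaving argument handles it. This gap cannot be closed, because the lemma as stated is in fact false. Take $a_n=10^{\,n}$ and $b_m=3\cdot 10^{\,m}$. Both sequences are strictly increasing and tend to infinity, and every ratio satisfies
\[
\frac{a_n}{b_m}=\frac{10^{\,n-m}}{3}\in\Bigl\{\ldots,\ \tfrac{1}{30},\ \tfrac{1}{3},\ \tfrac{10}{3},\ \tfrac{100}{3},\ \ldots\Bigr\},
\]
a set disjoint from $[1,2]$; hence no choice of subsequences can produce a limit in $[1,2]$. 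This example lies precisely in your pathological case (here $\rho_n=10/3$ and $\tau_m=3$ for every $n,m$), so no merging or interleaving trick will rescue it. Your ``conversion'' step in the second case is also unjustified: replacing $n_k$ by $n_k+1$ need not land the ratio in $[1,2]$, since $a_{n+1}/a_n$ is uncontrolled.

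The paper's own proof has the same defect in different clothing: it asserts that if $A_k=\{i:\tfrac12 a_k\le b_i\le a_k\}$ is nonempty for only finitely many $k$ then $\{b_i\}$ is bounded, but in the example above $A_k=\varnothing$ for every $k$ while $b_i\to\infty$. A natural repair is to impose a mild gap condition on one of the sequences, say $\sup_i b_{i+1}/b_i\le\Lambda<\infty$; then your selection $m(n)$ immediately yields $a_n/b_{m(n)}\in[1,\Lambda)$ for all large $n$, and Bolzano--Weierstrass gives a limit $\alpha\in[1,\Lambda]$ rather than $[1,2]$. Whether such a hypothesis is available, and whether a limit merely in some fixed compact subset of $(0,\infty)$ suffices, depends on how the lemma is used downstream and is not settled by the lemma as stated.
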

\begin{proof}
Set $A_k=\{i:\frac{1}{2}a_k\le b_i\le a_k\}.$ If $\#\{k|A_k\not=\varnothing\}<\infty,$ take $\tilde{k}=\max\{k|A_k\not=\varnothing\},$ then $b_i\le a_{\tilde{k}}$ for all $i.$ It makes a contradicts with $b_i\to\infty.$ It follows that there exists infinitely many indices $k$ such that $a_k/b_{i_k}\in[1,2]$ for $i_k\in A_k.$
 From Bolzano-Weierstrass theorem, one may take subsequences $\{a_{n_k}\} $ and $\{b_{m_k}\}$ so that $$\lim_{k\to\infty}\frac{a_{n_k}}{b_{m_k}}=\alpha\in[1,2].$$
\end{proof}

It is worth noting that $s_0$ in the following three-circle type theorem is independent of the choice of $u\in\mathcal{H}_d(M).$ 

\begin{lem}\label{lem44}
For each $d\not\in\mathscr{D}(\bC),$ there exists a constant $s_0=s_0(M,\bC,d)$ such that the following  property holds. If $u$ is a harmonic function on $M$ and $s\ge s_0,$ then
$$I_u(s)\le 2^{2d}I_u(s/2)\implies I_u(s/2)\le 2^{2d}I_u(s/4).$$ 
\end{lem}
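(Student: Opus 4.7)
The plan is to argue by contradiction: suppose the implication fails, so there exist positive numbers $s_j\to\infty$ and harmonic functions $u_j$ on $M$ with $I_{u_j}(s_j)\le 2^{2d}I_{u_j}(s_j/2)$ but $I_{u_j}(s_j/2)>2^{2d}I_{u_j}(s_j/4)$. Using Lemma \ref{lem42} to align $\{s_j\}$ with the scales $\{R_i\}$ along which $R_i^{-1}M\to\bC$, I extract subsequences so that $t_k:=s_{j_k}/R_{i_k}\to\alpha\in[1,2]$. The rescalings $\hat M_k:=s_{j_k}^{-1}M=t_k^{-1}(R_{i_k}^{-1}M)$ then converge as multiplicity-one varifolds to $\alpha^{-1}\bC=\bC$, since $\bC$ is a cone. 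Setting $v_k(x):=u_{j_k}(s_{j_k}x)/\sqrt{I_{u_{j_k}}(s_{j_k}/2)}$ on $\hat M_k$ normalizes the three-circle data to
$$I_{v_k}(1)\le 2^{2d},\qquad I_{v_k}(1/2)=1,\qquad I_{v_k}(1/4)<2^{-2d}.$$

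Next I would extract a limit via Lemma \ref{lem41}, obtaining a harmonic function $v$ on $\bC\cap B_1$ that extends analytically across $\{r=0\}$; the $L^2$-convergence furnished by Lemma \ref{lem22} passes the three inequalities to the limit,
$$I_v(1)\le 2^{2d},\qquad I_v(1/2)=1,\qquad I_v(1/4)\le 2^{-2d}.$$
Inserting the spectral expansion $I_v(s)=\sum_{i\ge0} c_i^2 s^{2q_i}$ from Section 3 and substituting $s=e^{-t}$, the function
$$g(t):=e^{2dt}I_v(e^{-t})=\sum_{i\ge0} c_i^2 e^{2(d-q_i)t}$$
is convex in $t$ as a pointwise sum of real exponentials. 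The three limit inequalities become $g(0)\le g(\log 2)$ and $g(2\log 2)\le g(\log 2)$; adding them and comparing with the convexity bound $2g(\log 2)\le g(0)+g(2\log 2)$ forces equality throughout, so $g$ is affine on $[0,2\log 2]$. By linear independence of distinct real exponentials, $g$ must in fact be constant, which means $c_i=0$ whenever $q_i\ne d$. Since $d\notin\mathscr{D}(\bC)$, every $c_i$ vanishes and $v\equiv 0$, contradicting $I_v(1/2)=1$.

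The step I expect to be the main obstacle is the passage of the integral inequalities through the varifold convergence at the fixed radii $1$, $1/2$, $1/4$: Lemma \ref{lem22} yields $L^2$-convergence only on relatively compact subdomains, so verifying the limit identities requires care at $\partial B_{1/2}$ and $\partial B_{1/4}$ (using that $\bC\cap\partial B_s$ has zero $\mathcal{H}^n$-measure) together with a monotone-continuity argument as $s\nearrow 1$ to recover $I_v(1)$. A secondary subtlety is that the strict inequality $I_{v_k}(1/4)<2^{-2d}$ degenerates to a non-strict one in the limit, which is why the convexity step must be pushed all the way to force $g$ to be constant before the hypothesis $d\notin\mathscr{D}(\bC)$ delivers the final contradiction.
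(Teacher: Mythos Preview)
Your proposal is correct and follows essentially the same contradiction--rescale--pass-to-the-cone strategy as the paper. The only cosmetic differences are that you rescale by $s_j$ (landing at fixed radii $1,\tfrac12,\tfrac14$, using that $\alpha^{-1}\bC=\bC$) whereas the paper rescales by $R_i$ (landing at $\alpha,\tfrac{\alpha}{2},\tfrac{\alpha}{4}$), and that your endgame uses convexity of $t\mapsto\sum_i c_i^2 e^{2(d-q_i)t}$ while the paper invokes the equivalent Cauchy--Schwarz inequality $I_v(\alpha/2)^2\le I_v(\alpha)\,I_v(\alpha/4)$ on the spectral expansion, whose equality case forces a single nonzero $c_i$ and hence $d\in\mathscr{D}(\bC)$.
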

\begin{proof}
Suppose the Lemma failed. There exists a sequence $s_i\to\infty$  and  a sequence of harmonic functions $u_i$ on $M$ so that 
$$I_{u_i}(s_i)\le 2^{2d}I_{u_i}(s_i/2)\text{ but }I_{u_i}(s_i/2)>2^{2d}I_u(s_i/4).$$ 
From Lemma \ref{lem42}, there are subsequences of $\{s_i\}$ and $\{R_i\}$ (still denote by $\{s_i\}$ and $\{R_i\}$) satisfies 
$$\lim_{i\to\infty}\frac{s_i}{R_i}=\alpha\in[1,2].$$

Obviously, $I_{u_i}(s_i/2)\not=0.$ Now  we consider function $\tilde{u}_i$ on $M_i$ given by
$$\tilde{u}_i(x)=\frac{u_i(R_ix)}{\sqrt{I_{u_i}(s_i/2)}}.$$ 
It follows that $\tilde{u}_i$ is harmonic with 
$$I_{\tilde{u}_i}(\frac{s_i}{R_i})\le 2^{2d},I_{\tilde{u}_i}(\frac{s_i}{2R_i})=1\text{ and }I_{\tilde{u}_i}(\frac{s_i}{4R_i})<2^{-2d}.$$

From Lemma \ref{lem41}, there exists a non-zero harmonic function $v$ on $\bC\cap B_{\alpha}$ with
$I_v(s)=\sum_{i\ge0}c_i^2s^{2q_i}$ for any $0<s\le\alpha$
and $$I_v(\alpha)\le 2^{2d},I_v(\frac{\alpha}{2})=1, I_v(\frac{\alpha}{4})\le2^{-2d}.$$
However, Cauchy inequality means that
$$1=(\sum_{i\ge0} c_i^2\alpha^{2q_i}2^{-2q_i})^2\le(\sum_{i\ge0} c_i^2\alpha^{2q_i})(\sum_{i\ge0} c_i^2\alpha^{2q_i}4^{-2q_i})\le1.$$
So exactly one of the $c_i$ is non-zero, and then $d\in\mathscr{D}(\bC).$  It makes a contradiction.
\end{proof}

\begin{cor}\label{cor44}
For any $d\ge0$ and $\delta>0,$ there are constants $s_1=s_1(\bC,d,\delta)\ge1$ and $\Omega_0=\Omega_0(\bC,d,\delta)$ such that the following property holds. If $\Omega\ge\Omega_0$ and  $v$ is a harmonic function on $\bC$ with $I_v(1)=1$ and $I_v(\Omega)\le \Omega^{2d},$ then 
$$I_v(s)\le 2^{2d+\delta}I_v(s/2),\forall s\in[s_1,\log\Omega].$$ 
\end{cor}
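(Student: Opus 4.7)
The plan is to work directly with the spectral expansion of harmonic functions on $\bC$ from Section 3. Because the basis functions $r^{\mu_i}v_{ij}(r,y)\varphi_i(\theta)$ are homogeneous harmonic functions on $\bC$, and $v$ is globally harmonic with $I_v(\Omega)<\infty$, the expansion with scale-independent coefficients holds globally:
\[
I_v(s)=\sum_{i\ge 0} c_i^2 s^{2q_i}\qquad\text{for every }s>0.
\]
The hypotheses then become $\sum_i c_i^2=1$ and $\sum_i c_i^2\Omega^{2q_i}\le \Omega^{2d}$.

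I would split $I_v(s)$ at the frequency threshold $Q:=d+\delta/4$. The low-frequency part is handled trivially by extracting the largest allowed exponent:
\[
\sum_{q_i\le Q} c_i^2 s^{2q_i}\le 2^{2Q}\sum_{q_i\le Q} c_i^2 (s/2)^{2q_i}\le 2^{2d+\delta/2}\,I_v(s/2).
\]
For the high-frequency part, the key observation is that when $s<\Omega$ the quantity $(s/\Omega)^{2q_i}$ is decreasing in $q_i$, so $(s/\Omega)^{2q_i}\le (s/\Omega)^{2Q}$ whenever $q_i>Q$. Combining with the global hypothesis:
\[
\sum_{q_i>Q} c_i^2 s^{2q_i}=\sum_{q_i>Q} c_i^2\Omega^{2q_i}\left(\frac{s}{\Omega}\right)^{2q_i}\le \left(\frac{s}{\Omega}\right)^{2Q}\Omega^{2d}=s^{2Q}\,\Omega^{-\delta/2}.
\]
This is where the endpoint $\log\Omega$ in the interval $[s_1,\log\Omega]$ is essential: it forces $s^{2Q}\le(\log\Omega)^{2Q}$, which is crushed by the factor $\Omega^{-\delta/2}$ as $\Omega\to\infty$.

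Setting $s_1=2$ guarantees $I_v(s/2)\ge I_v(1)=1$ throughout $s\in[s_1,\log\Omega]$, so the high-frequency contribution to the ratio $I_v(s)/I_v(s/2)$ is at most $(\log\Omega)^{2Q}\Omega^{-\delta/2}$. Choosing $\Omega_0$ large enough that this quantity stays below $2^{2d+\delta}-2^{2d+\delta/2}$ and summing the two contributions yields $I_v(s)\le 2^{2d+\delta}I_v(s/2)$, as required. The most delicate point is not the inequalities themselves but the preliminary justification of the global spectral expansion: Section 3 establishes it only on $\bC\cap B_1$, and one must argue by uniqueness of the expansion on each ball, using harmonicity of $v$ together with homogeneity of the basis functions, that the same coefficients $c_i$ represent $v$ at every scale.
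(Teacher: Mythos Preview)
Your proof is correct and takes a genuinely different route from the paper. The paper argues indirectly: it first observes that Lemma~\ref{lem44} applies equally well with $M$ replaced by $\bC$, yielding a scale $s_1(\bC,d,\delta)$ above which the doubling condition $I_v(s)\le 2^{2d+\delta}I_v(s/2)$ propagates downward; then it runs a contradiction argument, iterating the failure of the doubling condition upward from some $r_i\in[s_1,\log\Omega_i]$ to obtain $\Omega_i^{2d}\ge (\Omega_i/(2\log\Omega_i))^{2d+\delta}$, which is impossible for large $\Omega_i$. You instead exploit the spectral expansion $I_v(s)=\sum_i c_i^2 s^{2q_i}$ directly, splitting at the frequency threshold $Q=d+\delta/4$: the low modes give the factor $2^{2d+\delta/2}$ automatically, and the high modes are controlled by $I_v(\Omega)\le\Omega^{2d}$ together with $s\le\log\Omega$. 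Your argument is more elementary---it avoids the compactness/contradiction machinery behind Lemma~\ref{lem44} and gives an explicit $s_1=2$---while the paper's argument has the virtue of reusing Lemma~\ref{lem44} as a black box. Your remark about the global validity of the expansion is well taken but routine: rescaling and uniqueness of the orthogonal decomposition on each $\bC\cap B_R$ force the same coefficients at every scale, and indeed the paper itself uses this implicitly in the proof of Lemma~\ref{lem44}.
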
   
\begin{proof}
Without loss of generality, we assume $d+\delta/2\not\in\mathscr{D}(\bC).$ Observe that minimal submanifold $M$ in Lemma \ref{lem44} can be substituted by minimal cone $\bC$, which ensures the existence of $s_1=s_1(\bC,d,\delta)\ge1$ such that for any $s\ge s_1$ if $I_v(s)>2^{2d+\delta}I_v(s/2),$ then must be hold
$$I_v(2s)>2^{2d+\delta}I_u(s).$$ 

Assume, for contradiction, that the lemma fails. Then there exists a sequence $\Omega_i\to\infty$ and a sequence of harmonic functions $v_i$ on $\bC\cap B_{\Omega_i}$ with $I_{v_i}(1)=1$ and $I_{v_i}(\Omega_i)\le \Omega_i^{2d}$ but
$$I_{v_i}(r_i)> 2^{2d+\delta}I_{v_i}(r_i/2)$$
for some $r_i\in[s_1,\log\Omega_i].$ Let $k\in \mathbb{N}$ satisfies $2^{k}r_i<\Omega_i\le2^{k+1}r_i.$ By iteration, 
$$\Omega_i^{2d}\ge I_{v_i}(2^{k}r_i)\ge 2^{(2d+\delta)k}I_{v_i}(r_i)\ge (\frac{\Omega_i}{2r_i})^{2d+\delta}I_{v_i}(1)\ge\left(\frac{\Omega_i}{2\log\Omega_i}\right)^{2d+\delta}.$$
It makes a contradiction if take $\Omega_i$ enough large.
\end{proof}

For polynomial growth harmonic functions, we have the following three-circle type theorem, which is analogous to Proposition 4.2 in \cite{Hua23}.

\begin{lem}\label{lem45}
For any $d\ge0,$ and $\delta>0,$ there exists $C=C(d,\delta)$ and $s_2=s_2(M,\bC,d,\delta)$ satisfy the following property. If $u$ is a function in $\mathcal{H}_d(M),$ then for any $s\ge t\ge s_2$ it holds
$$I_u(s)\le C(\frac{s}{t})^{2(d+\delta)}I_u(t).$$
\end{lem}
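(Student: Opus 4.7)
The plan is to reduce Lemma \ref{lem45} to a single dyadic doubling estimate at scale $2$. Since $\mathscr{D}(\bC) = \{q_i\}_{i \ge 0}$ is discrete by Section 3, I can pick $d'' \in (d, d+\delta) \setminus \mathscr{D}(\bC)$. Let $s_0 = s_0(M, \bC, d'')$ be the threshold supplied by Lemma \ref{lem44}, and set $s_2 = s_0$. I will show that every nonzero $u \in \mathcal{H}_d(M)$ satisfies
\[
I_u(2t) \le 2^{2d''} I_u(t) \qquad \text{for all } t \ge s_2.
\]
Granting this, dyadic iteration produces $I_u(2^k t) \le 2^{2d'' k} I_u(t)$, and the monotonicity of $I_u$ (from $u^2$ being subharmonic) interpolates between dyadic scales, yielding the Lemma with $C = 2^{2(d+\delta)}$ since $d'' \le d + \delta$.

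To prove the doubling claim, I argue by contradiction using the contrapositive of Lemma \ref{lem44}: for $s \ge s_0$, if $I_u(s/2) > 2^{2d''} I_u(s/4)$, then $I_u(s) > 2^{2d''} I_u(s/2)$. Suppose there were some $t_0 \ge s_2$ with $I_u(2t_0) > 2^{2d''} I_u(t_0)$. Applying the contrapositive at $s = 4t_0$, then at $s = 8t_0$, and so on (all valid since $2^{k+1}t_0 \ge s_0$), I would obtain inductively
\[
I_u(2^j t_0) > 2^{2d'' j} I_u(t_0) \qquad \text{for every } j \ge 1.
\]
So a single failure of doubling at $t_0$ forces rapid growth outward at every scale.

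This contradicts $u \in \mathcal{H}_d(M)$: there is a constant $C_u$ with $I_u(s) \le C_u(1 + s^{2d})$, and unique continuation for the nonzero harmonic function $u$ on $M$ gives $I_u(t_0) > 0$. Since $d'' > d$, comparing the two bounds,
\[
2^{2d'' j} I_u(t_0) < C_u\bigl(1 + (2^j t_0)^{2d}\bigr),
\]
and letting $j \to \infty$ yields an impossible inequality. Hence the doubling claim holds, and the Lemma follows from the iteration described above.

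The main subtlety is the choice of $d''$, which must simultaneously satisfy three constraints: $d'' > d$ (so that the exponential lower bound eventually outruns the polynomial upper bound), $d'' < d + \delta$ (so that the exponent in the final conclusion is at most $d + \delta$), and $d'' \notin \mathscr{D}(\bC)$ (so that Lemma \ref{lem44} applies). The discreteness of $\mathscr{D}(\bC)$ from Section 3 is exactly what makes such a choice available, and it is precisely this which forces the three-circle inequality to carry the slack parameter $\delta > 0$ rather than holding sharply at exponent $d$.
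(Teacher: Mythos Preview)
Your argument is correct and is genuinely shorter than the paper's. Both proofs end at the same point: once the dyadic doubling $I_u(2t)\le 2^{2d''}I_u(t)$ holds for all $t\ge s_2=s_0(M,\bC,d'')$, the iteration and monotonicity of $I_u$ finish the job. The difference lies in how this doubling is obtained. The paper first uses Lemma~\ref{lem21} to locate arbitrarily large scales where doubling holds, passes to the cone via the compactness Lemma~\ref{lem41} and the alignment Lemma~\ref{lem42}, invokes Corollary~\ref{cor44} on the limit $v$, transfers back to $u$ to get doubling on a window $[R_i s_1, R_i\log\Omega_0]$, and finally propagates downward to $s_2$ using Lemma~\ref{lem44}. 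You bypass this detour entirely: the contrapositive of Lemma~\ref{lem44} says a single failure of doubling at $t_0\ge s_0$ propagates upward forever, forcing $I_u(2^jt_0)>2^{2d''j}I_u(t_0)$, which contradicts $I_u(s)\le C_u(1+s^{2d})$ since $d''>d$. This is precisely the mechanism in the proof of Corollary~\ref{cor44} (there applied on $\bC$ over a finite range), but because $u$ is globally defined on $M$ you can iterate to infinity and get the contradiction directly. Your route needs only Lemma~\ref{lem44} and the a priori growth bound, not Lemmas~\ref{lem21}, \ref{lem41}, \ref{lem42} or Corollary~\ref{cor44}. One small point: to make $I_u(t_0)>0$ you invoke unique continuation; this is fine assuming $M$ is connected and meets $B_{s_2}$ (if not, enlarge $s_2$ by a constant depending only on $M$, which is allowed).
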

\begin{proof}
Suppose $d+\delta\not\in \mathscr{D}(\bC)$ and $u\not\equiv0.$ Take $\Omega_0$ and $s_1$ as in Corollary \ref{cor44}. By virtue of Lemma \ref{lem21}, we can find a sequence $m_i\to\infty$ such that
$$I_u(\Omega_0^{m_i+1})\le\Omega_0^{2d+\delta}I_u(\Omega_0^{m_i}).$$
Denote by $s_i=\Omega_0^{m_i}.$ By lemma \ref{lem42}, after take subsequence, we can assume that $\lim_{i\to\infty}\frac{s_i}{R_i}=\alpha\in[1,2].$ Let
$$u_i(x)=\frac{u(R_ix)}{\sqrt{I_u(s_i)}},$$
then $I_{u_i}(\frac{s_i}{R_i})=1$ and for any $s\le\frac{s_i}{R_i}\Omega_0,$ 
$$I_{u_i}(s)=\frac{I_u(R_is)}{I_u(s_i)}\le\frac{I_u(\Omega_0^{m_i+1})}{I_u(\Omega_0^{m_i})}\le\Omega_0^{2d+\delta}.$$
From Lemma \ref{lem41}, up to a subsequence, there exists a harmonic function $v$ on $\bC\cap B_{\alpha\Omega_0}$ such that $u_i$ converges locally and smoothly to $v$ and $I_v(\alpha)=1$ and $I_v(\alpha\Omega_0)\le\Omega_0^{2d+\delta}.$ From Corollary \ref{cor44},  then for every $s_1\le s\le\log\Omega_0,$
$$I_v(s)\le 2^{2d+\delta}I_v(s/2).$$
Note that for any $s_1\le s\le\log\Omega_0$ we have the uniformly converge of 
$$\lim_{i\to\infty}\frac{I_{u}(R_is)}{I_{u}(R_is/2)}=\lim_{i\to\infty}\frac{I_{u_i}(s)}{I_{u_i}(s/2)}=\frac{I_{v}(s)}{I_{v}(s/2)}\le 2^{2d+\delta}.$$
So there exists large $i_0$ such that $I_u(s)\le \Omega^{2(d+\delta)}I_u(s/2) $ for $R_is_1\le s\le R_i\log\Omega_0$ when $i\ge i_0.$ By Lemma \ref{lem44}, there is $s_2(M,\bC,d+\delta)>0$ so that $I_u(s)\le \Omega^{2(d+\delta)}I_u(s/2) $ for $s_2\le s\le R_i\log\Omega_0$ when $i\ge i_0.$ Since $R_i\to\infty,$  
$$I_u(s)\le 2^{2(d+\delta)}I_u(s/2),\forall s\ge s_2.$$
Furthermore, if $s\ge t\ge s_2,$ there is unique $k\in\mathbb{N}$ such that $2^kt\le s<2^{k+1}t,$ then
$$\frac{I_u(s)}{I_u(t)}\le\frac{I_u(2^{k+1}t)}{I_u(2^kt)}\frac{I_u(2^{k}t)}{I_u(2^{k-1}t)}\cdots\frac{I_u(2t)}{I_u(t)}\le 2^{2(k+1)(d+\delta)}\le 2^{2(2+2\delta)}(\frac{s}{t})^{2(d+\delta)}.$$
Set $C=2^{2d+4\delta}$ the proof is complete.
\end{proof}

\begin{rk}\label{rk36}
Under the hypothesis of Lemma \ref{lem45}, take $R\ge s_2$ and let $\tilde{M}=R^{-1}M.$ Then for any $u\in\mathcal{H}_d(\tilde{M})$ and $s\ge1,$
$$I_{u}(s)=(Rs)^{-n}\int_{M\cap B_{Rr}}u^2(x/R)\le Cs^{2(d+\delta)} R^{-n}\int_{M\cap B_R}u^2(x/R)= Cs^{2(d+\delta)}I_u(1).$$
\end{rk}

The proof of Theorem \ref{thm11} is mainly modified from that of Proposition 4.4 in \cite{Hua19} and Theorem 1.6 in \cite{Hua23}.

\begin{proof}[Proof of Theorem \ref{thm11}]
First, we claim that for any $d>0,$ $h_d(M)\le h_d(\bC).$ Given any $d\ge0,$ choose small $\delta>0$ so that $(d,d+\delta)\cap\mathscr{D}(\bC)=\varnothing.$  Let $m=h_d(M)-1.$ Take $s_2$ as in Lemma \ref{lem45}, we may assume $R_i\ge s_2.$  Suppose $\{u_i^{(a)}\}_{1\le a\le m}$ is a sequence linearly independent non-constant functions in $\mathcal{H}_d(M_i)$ with respect to inner product $J_1$ such that 
\begin{equation*}
J_1(u_i^{(a)},1)=0,J_1(u_i^{(a)},u_i^{(b)})=\delta_{ab}
\end{equation*}
    
By Remark \ref{rk36},  $I_{u_{i}^{(a)}}(s)\le Cs^{2(d+\delta)}$ when $s\ge1.$ Then the mean value inequality of subharmonic function implies that
\begin{equation*}
\sup_{M_i\cap B_s}|u_i^{(a)}|^2\le C(n)I_{v_{i,a}}(2s)\le C(n,d,\delta)s^{2(d+\delta)}.
\end{equation*}
    
From Lemma \ref{lem41}, we can find harmonic functions $v^{(a)}$ on $\bC$  satisfy
\begin{equation*}
J_1(v^{(a)},1)=0,J_1(v^{(a)},v^{(b)})=\delta_{ab}\ \text{and}\ \sup_{\bC\cap B_s}|v^{(a)}|^2\le C(n,d,\delta)s^{2(d+\delta)}.
\end{equation*}   
It means that $v^{(a)}\in\mathcal{H}_{d+\delta}(\bC)=\mathcal{H}_d(\bC)$ are non-constant and linearly independent. Thus, $h_d(M)=m+1\le h_d(\bC).$

To complete the proof, we show the reverse inequality: $h_d(M)\ge h_d(\bC)$ for any positive $d\not\in\mathscr{D}(\bC).$ Fixed $\delta>0$ and $p_0\in M_i,$ let $p_i=\frac{1}{R_i}p_0\in M_i$ and $S_{\lambda}=\{x\in \R^N:d(x,\sing\bC)<\delta\}.$ By virtue of Lemma 2.7 in \cite{Guo24}, there there are $\lambda_i$-Gromov-Hausdorff approximations $\phi_i:(M_i,p_i)\to (\bC,0)$ so that $\lambda_i\to0$ as $i\to\infty$ and $\phi_i:\bC\backslash S_{\lambda}\to M_i\backslash S_{\lambda}$ is a diffeomorphism which converges smoothly to identity map on $\bC\backslash S_{\lambda}.$

Let $v$ be a non-zero function in $\mathcal{H}_d(\bC)$ with $v(0)=0$ and $U_{i,\lambda}$ be an smooth exhaustion of $B_1\backslash S_{\lambda}.$ Let $u_{i,\lambda}$ be the solution of Dirichlet problem
\begin{equation*}
    \begin{cases}
    \Delta u_{i,\lambda}=0,& M_i\cap U_{i,\lambda}\\
    u_{i,\lambda}=v\circ\phi_i^{-1},& M_i\cap\p U_{i,\lambda}
    \end{cases}
\end{equation*}
Since $M_i$ converges smoothly to $\bC$ outside $S_{\lambda},$ there exists harmonic function $v_{\lambda}$ in $\bC\cap U_{i,\lambda}$ so that up to a subsequence, $u_{i,\lambda}$ converges to $v_{\lambda}$ smoothly. From maximum principle, we have $v_{\lambda}=v$ in $\bC\cap U_{i,\lambda}.$ On the other hand, from Rellich compactness theorem, there is a harmonic function $\bar{u}_i$ on $M_i\cap B_1$ such that after take a subsequence, $u_{i,\lambda}$ converges to $\bar{u}_i$ uniformly on compact subsets of $M_i\cap B_1$ as $\lambda\to0.$ Hence, using Lemma \ref{lem41} and taking diagonal sequence, there is a subsequence of $\bar{u}_i$ (still denote by $\bar{u}_i$) converges to $v$ in $C_{loc}^{\infty}(B_1).$ 

Take $\delta>0$ so that $d+\delta\not\in\mathscr{D}(\bC)$ and $s_0(M,\bC,d+\delta)$ as in Lemma \ref{lem44}. Let $u_i(x)=\bar{u}_i(x)-\bar{u}_i(p_i)$ and
$$\tilde{u}_i(x)=\frac{u_i(x/R_i)}{\sqrt{I_{u_i}(s_0/R_i)}}.$$    
Then $\tilde{u}_i$ is harmonic on $M\cap B_{R_i}$ with $\tilde{u}_i(p_0)=0$ and $I_{\tilde{u}_i}(s_0)=1.$  

Note that for any $s\in[\frac{1}{4},\frac{3}{4}],$ we have the following uniform convergence
$$\lim_{i\to\infty}\frac{I_{\tilde{u}_i}(R_is)}{I_{\tilde{u}_i}(R_is/2)}=\lim_{i\to\infty}\frac{I_{u_i}(s)}{I_{u_i}(s/2)}=\frac{I_v(s)}{I_v(s/2)}\le 2^{2d}.$$
Thus, there exists $i_0$ independent of $s$ so that $I_{\tilde{u}_i}(s)\le 2^{2(d+\delta)}I_{\tilde{u}_i}(s/2)$ for any $s\in [\frac{1}{4}R_i,\frac{3}{4}R_i]$ when $i\ge i_0.$ By Lemma \ref{lem44}, for all $i\ge i_0$ and $s_0\le s\le \frac{3}{4}R_i,$
$$I_{\tilde{u}_i}(s)\le 2^{2(d+\delta)}I_{\tilde{u}_i}(\frac{s}{2}),\forall s\in[s_0,\frac{3}{4}R_i],$$ 
and then by iteration, we have 
$$I_{\tilde{u}_i}(s)\le  C(M,d,\delta)s^{2d+2\delta}.$$ 
From mean value inequality,
$$\tilde{u}_i^2(p)\le C(n)I_{\tilde{u}_i}(2|p|)\le  C(M,n,d,\delta)(\max\{s_0,|p|\})^{2(d+\delta)},\forall p\in M\cap B_{\frac{3}{8}R_i}.$$

By Rellich compactness theorem and taking diagonal subsequence, there exists subsequence of $\tilde{u}_i$ converge to a smooth harmonic function $u$ defined $M$  with $u(p_0)=0$ and $I_{u}(s_0)=1$ and
$$u^2(p)\le C(1+|p|^{2(d+\delta)}).$$
It follows that $u_i$ is a non-constant function in $\mathcal{H}_{d+\delta}(M).$
    
For $m\ge2,$ let $v^{(1)},v^{(2)},\cdots, v^{(m)}$ be a family linear independent non-constant functions in $\mathcal{H}_d(\bC)$ with $v^{(a)}(0)=0$ for all $1\le a\le m.$ Now for $1\le a,b\le m-1,$ from the above construction we can suppose the following functions exist.
    
(1) non-constant functions $u_i^{(a)}$ on $M_i\cap B_1$ with $u_i^{(a)}(p_i)=0$ and converge to $v^{(a)}$ locally and smoothly. 
    
(2) non-constant functions $w_i^{(a)}$ on $M_i\cap B_1$ given by
$$w_i^{(1)}=u_i^{(1)},w_i^{(a)}=u_i^{(a)}+\sum_{b<a}\alpha_i^{(a,b)}w_i^{(b)},a>1,$$
where $\alpha_i^{(a,b)}$ determined by $J_{s_0/R_i}(w_i^{(a)},w_i^{(b)})=0$ for $1\le a\not=b\le m-1.$
    
(3) non-constant functions $\tilde{w}^{(a)}_i$ on $M\cap B_{R_i}$ defined by 
$$\tilde{w}^{(a)}_i(x)=\frac{w_i^{(a)}(x/R_i)}{\sqrt{I_{w^{(a)}_i}(s_0/R_i)}},$$
and $\tilde{w}_i^{(a)}$ converges to non-constant functions $w^{(a)}\in\mathcal{H}_{d+\delta}(M)$ with $J_{s_0}(w^{(a)},w^{(b)})=\delta_{ab}.$

For $a=m,$ there are non-constant harmonic functions ${u_i^{(m)}}$ on $M_i\cap B_{1}$ with $u_i^{(m)}(p_i)=0$ and converges locally and smoothly to $v^{(m)}.$ Let $\{\alpha_i^{(m,b)}\}_{1\le b\le m-1}$ be a sequence numbers so that $w_i^{(m)}=u_i^{(m)}+\sum_{1\le b<m}\alpha_i^{(m,b)}u_i^{(b)}$ satisfies $$J_{s_0/R_i}(w_i^{(m)},w_i^{(b)})=0,\text{ when } b\not=m.$$ After take a subsequence, we may assume $\lim_{i\to\infty}\alpha_i^{(m,b)}=\beta^{(m,b)}\in\R\cup\{\infty\}.$ We now consider two cases depending on whether $\beta^{(m,b)}$ is bounded.
    
Case 1. If all $\beta^{(m,b)}$ are bounded. Then $w_i^{(m)}$ converges to a harmonic function $v^{(m)}+\sum_{1\le b<m}\beta^{(m,b)}v^{(b)}$ which is non-zero because of the linear independence of $\{v^{(a)}\}_{1\le a\le m}.$ As the above construction, the sequence
$$\tilde{w}_i^{(m)}(x)=\frac{w_i^{(m)}(x/R_i)}{\sqrt{I_{w_i^{(m)}}(s_0/R_i)}}$$ (up to a subsequence) converges to a non-constant function $w^{(m)}\in\mathcal{H}_{d+\delta}(M)$ and for every $1\le b<m$
$$J_{s_0}(w^{(m)},w^{(b)})=\lim_{i\to\infty}J_{s_0}(\tilde{w}_i^{(m)},\tilde{w}_i^{(b)})=\lim_{i\to\infty}\frac{J_{s_0/R_i}(w_i^{(m)},w_i^{(b)})}{\sqrt{I_{w_i^{(m)}}(s_0/R_i)}\sqrt{I_{w_i^{(b)}}(s_0/R_i)}}=0.$$
It means $\{w^{(a)}\}_{1\le a\le m}$ are linear independent.
    
Case 2. Otherwise, up to a subsequence, there is one sequence $\alpha_i^{(m,b_0)}$ so that $\lim_{i\to\infty}\alpha_i^{(m,b_0)}=\infty$ and $\lim_{i\to\infty}\frac{\alpha_i^{(m,b)}}{\alpha_i^{(m,b_0)}}=\gamma^{(m,b)}\in\R$. Let
$$\hat{w}_i^{(m)}=\frac{1}{\alpha_i^{(m,b_0)}}w_i^{(m)},$$
it still holds $J_{s_0/R_i}(\hat{w}_i^{(m)},w_i^{b})=0$ when $b\not=m$ and $\hat{w}_i^{(m)}$ converges to  non-zero harmonic function $v^{(b_0)}+\sum_{1\le b\not=b_0<m}\gamma^{(m,b)}v^{(b)}.$ Denote by 
$$\tilde{w}_i^{(m)}=\frac{\hat{w}_i^{(m)}(x/R_i)}{\sqrt{I_{\hat{w}_i^{(m)}}(s_0/R_i)}}.$$
As in case 1, after take a subsequence, $\tilde{w}_i^{(m)}$ converges to a non-constant function $w^{(m)}\in\mathcal{H}_{d+\delta}(M)$ with $J_{s_0}(w^{(m)},w^{(b)})=0$ for $b\not=m.$ 

The above discuss implies $h_{d+\delta}(M)\ge h_d(\bC)$ provided $d+\delta\not\in\mathscr{D}(\bC).$ In particular, if $d\not\in\mathscr{D}(\bC),$ one may choose small $\delta>0$ so that $[d-\delta,d]\cap\mathscr{D}(\bC)=\varnothing,$ then $h_{d}(M)\ge h_{d-\delta}(\bC)=h_d(\bC).$

\end{proof}

\end{document}